\newtheorem{thm}{Theorem}
\newtheorem{prop}[thm]{Proposition}
\newtheorem{lemma}[thm]{Lemma}
\theoremstyle{definition}
\newtheorem{eg}[thm]{Example}
\theoremstyle{remark}
\newtheorem{case'}{Case}
\numberwithin{equation}{section}
\numberwithin{thm}{section}
\renewcommand{\baselinestretch}{1.15}
\renewcommand{\leq}{\leqslant}
\def\({\left(}
\def\){\right)}
\def\[{\left[}
\def\]{\right]}
\def\={\quad = \quad}
\def\+{\quad + \quad}
\def\R{{\mathbb{R}}}
\def\Z{{\mathbb{Z}}}
\def\C{{\mathbb{C}}}
\def\HH{{\mathbb{H}}}
\def\em{\endmatrix\right]}
\def \g{\gamma}
\def \t {\bold t}
\def \a {\alpha}
\def \l {\lambda}
\def \G {\Gamma}
\def \H {\mathcal H}
\def \K {\mathcal K}
\newcommand{\note}[2][\null]{%
  \marginpar{\renewcommand{\baselinestretch}{1}\vspace{-1em}\hrule\vspace{3pt}%
  \footnotesize\raggedright\textsf{#2\ifx#1\null\else\\\hfill--- 
  {\em #1}\fi}\vspace{1.5em}}%
}
\begin{document}

\title[Abstract Parseval wavelet frames]{The orthonormal dilation property for abstract Parseval wavelet frames}

 \author{Bradley Currey and Azita Mayeli}


\date{\today}

\maketitle 

\begin{abstract}
 In this work we introduce a class of discrete groups containing subgroups  of abstract translations and dilations, respectively. A variety of wavelet systems can appear as $\pi(\G)\psi$, where $\pi$ is a unitary representation of a wavelet group and $\G$ is the abstract pseudo-lattice $\G$. We prove a condition in order that a Parseval frame  $\pi(\G)\psi$ can be dilated to an orthonormal basis of the form $\tau(\G)\Psi$ where $\tau$ is a super-representation of $\pi$. For a subclass of groups that includes the case where the translation subgroup  is Heisenberg, we show that this condition always holds, and we cite familiar examples as applications. 

  \end{abstract}
   {\footnotesize {Mathematics Subject Classification} (2010): 42C15, 42C40, 43A65.}

 {\footnotesize
 Keywords and phrases: \textit{frame, dilation, wavelet, Baumslag-Solitar group, shearlet}}

\section{\bf Introduction and preliminaries}
 
Given a Parseval frame $\{\psi_\a\}$ in a Hilbert space $\H$, it is known that there is a Hilbert space $\K$ and an orthornomal basis $\{\Psi_\a\}$ for $\K$ such that $\H \subset \K$ and $\psi_\a = P_\H(\Psi_\a)$ where $P_\H$ is the orthogonal projection of $\K$ onto $\H$ \cite{HL00}. In this case it is said that $\{\Psi_\a\}$ is an orthonormal dilation of $\{\psi_\a\}$.  If $\{\psi_\a\}$ is of the form $\pi(G)\psi$ where $G$ is group and $\pi$ is a unitary representation of $G$, then it is also known \cite{HL00} that there is an orthonormal dilation of the form $\tau(G)\Psi$, where $\tau$ is a unitary representation of $G$ acting in $\K$ such that $\tau(g)|_{\H} = \pi(g)$ for all $g \in G$ and such that $P_\H(\Psi) = \psi$. An affine wavelet system is not of the form $\pi(G)\psi$, but there is nevertheless an underlying group structure: it can regarded as having the form $\pi(\G)\psi$, where $\G$ is a discrete {\it pseudo-lattice} in a group $G$. For the wavelet system $\{2^{/2}\psi(2^j \cdot -k) : j\in \Z, k\in \Z\}$ in $L^2(\R)$, one can take $G$ to be the connected Lie group of affine transformations of the line with $\pi$ the quasiregular representation induced from the dilation subgroup, or (as in \cite{D2}) one can take $G$ to be the Baumslag-Solitar group $BS(1,2) = \langle u,t : utu^{-1} = t^2\rangle$ with $\pi(u)$ and $\pi(t)$ the 2-dilation and unit translation, respectively. When a Parseval wavelet frame has such a structure, it is natural to ask if there is an orthonormal dilation with the same structure: more precisely, if $\{\psi_\a\} = \pi(\G)\psi$,  is there a unitary representation $\tau$ of $G$ acting in a Hilbert space $\K$ containing $\H$, and a vector $\Psi\in \K$, such that $\tau(g)|_{\H} = \pi(g)$ for all $g \in G$ and such that $P_\H(\Psi) = \psi$? In this case we say that $\pi(\G)\psi$ has the $G$-dilation property, and it is then natural to ask for an explicit description of a $G$-dilation of $\pi(\G)\psi$. For the 2-wavelet system on the line, it is shown in \cite{D2} that for the $G = BS(1,2)$, every system $\pi(\G)\psi$ has the $G$-dilation property, and an explicit description of $G$-dilations of is carried out for Shannon-type wavelets. \\
 
In this paper we introduce a natural and general class of groups $G$, for which a number of well-known function systems, including both affine wavelet systems and shearlet systems, can be viewed as systems of the form $\pi(\G)\psi$ where $\G$ is a pseudo-lattice in $G$. We generalize the methods of \cite{D2} to prove a sufficient condition on the group $G$ in order that every such system has the $G$-dilation property. We then describe two natural families of wavelet groups and prove that they satisfy this sufficient condition. As one example, we exhibit a natural group $G$ and representation $\pi$ such that a shearlet system is of the form $\pi(\G)\psi$ and has the $G$-dilation property.

For the remainder of this paper, all groups are automatically countable and discrete. By {\it representation} of a group $G$, we shall mean a homomorphism of $G$ into the group of unitary operators on some Hilbert space $\H$ that is continuous in the strong operator topology. Representations will be assumed to {\it faithful}, that is, one-to-one mappings. \\
 
Let $\G_0$ be  a countable discrete group and $\a: \G_0 \rightarrow \G_0$ a monomomorphism. Define
  $$G(\a,\G_0):=\langle u,\G_0:  u\g u^{-1}=\a(\g),~~\forall ~ \g\in\G_0 \rangle.$$
The subset $ \G = \G_1\G_0$ where $\G_1 = \{u^j :  j\in \Bbb Z\}$ will be called the {\it standard pseudo-lattice} in $G$. As an example, observe that if $\G_0 = \Z$ and $\a_2$ is the monomorphism of $\Z$ defined by  $\a(1) = 2$, then $G(\a,\Z) = BS(1,2)$.
  
  

In the following section we use positive-definite maps to obtain a sufficient condition on the group $G$ in order that every Parseval wavelet frame  $\pi(\G)\psi$ has the $G$-dilation property. Then in Section \ref{expamples} we prove that our condition holds for two families of groups $G(\a,\G_0)$, and describe three examples.

 \section{\bf The group dilation property.}\label{D section}
 
  A map 
   $K : X \times X \rightarrow \C$ is  called a {\it positive definite map} if 
 for all finite sequences $\{\g_1, \g_2, \dots , \g_k\} $ in $X$ and $\{\xi_1, \xi_2, \dots , \xi_k\} \subset \C$, 
$$
\sum_{1 \le i, j \le k} K(\g_i,\g_j) \xi_i\overline{\xi}_j \ge 0.
$$


\noindent
If $X=G$ is  a group, then  following \cite{D1}, we say that $K : G \times G \rightarrow \C$  is a {\it group positive definite map} if  $K$ is a positive definite map and $K(sx,sy) = K(x,y)$ holds for all $s, x$ and $y$ in $G$. By \cite[Theorem 2.8]{D1}, every group positive definite map has the form 
$$
K_{\rho,\eta}(x,y) = \langle\rho(x)\eta, \rho(y)\eta\rangle
$$
where $\rho$ is a representation of $G$ and $\eta$ is a cyclic vector for $\rho$. 

For the remainder of this section, we fix a group $G = G(\a,\G_0)$, with $\G = \G_1\G_0$, and write the element $u^j\gamma \in \G$ as $(j,\gamma)$. 
 Let $\rho$ be a representation of $\Gamma_0$; we say that a  representation $T$ is an {\it $\a$-root  of   $\rho$} if $T\circ \a=\rho$. 
In the following abstract version of \cite[Theorem 2.1]{D2}, we use this notion to formulate a sufficient condition in order that a positive definite map on $\G$ extends to a group positive definite map.

\begin{prop}\label{second-technical}
Suppose that every representation  of $\G_0$  has an $\a$-root.  Let $K : \G \times \G \rightarrow \C$ be a positive definite mapping such that for any $(j, \g) $ and $(j',\g') $ in $\G$,  and $\g_0\in \G_0$, the relations

 \begin{equation}\label{K relations}
 \begin{aligned}
 K((j+1,\g), (j'+1,\g'))&=  K((j,\g), (j',\g'))\\
 K((j,\a^{-j}(\g_0)\g), (j',\a^{-j'}(\g_0)\g'))&=  K((j,\g), (j',\g')), ~ ~ j\leq 0,
 \end{aligned}
  \end{equation}
  both hold. 
  Then
  $K$ is the restriction of a group positive definite map $K_{\tau,\psi}$. More explicitly, 
   there is a representation $\tau$ of $G$ acting in a  Hilbert space $\H$, and a vector $\psi\in \H$, such that   $\H=\overline{span}\{\tau(\G)\psi\}$  and 
  $$
  K((j,\g), (j',\g'))= \langle \tau(j,\g)\psi, \tau(j',\g')\psi \rangle. 
  $$


\end{prop}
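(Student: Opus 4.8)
The plan is to build the representation $\tau$ of $G$ directly from the given data, using $K$ to define an inner product on a space of finitely supported functions on $\Gamma$. First I would let $V$ be the vector space of finitely supported $\C$-valued functions on $\Gamma$, with the sesquilinear form $\langle \sum c_{(j,\g)} \delta_{(j,\g)}, \sum d_{(j',\g')} \delta_{(j',\g')}\rangle := \sum c_{(j,\g)} \overline{d_{(j',\g')}} K((j,\g),(j',\g'))$. Positive-definiteness of $K$ makes this a nonnegative form; quotienting by its kernel and completing yields a Hilbert space $\H$, and I would set $\psi$ to be the image of $\delta_{(0,e)}$. The two relations in \eqref{K relations} are exactly the invariances needed to define partial isometries on $V$: the first relation says the shift $\delta_{(j,\g)} \mapsto \delta_{(j+1,\g)}$ preserves the form, and the second says that for $j \le 0$ the map $\delta_{(j,\g)} \mapsto \delta_{(j,\a^{-j}(\g_0)\g)}$ preserves the form. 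The trouble is that these operators are only defined on parts of $\Gamma$ (the second only for $j\le 0$), so they do not immediately assemble into a representation of $G$; they give at best a representation of $\Gamma$ as a semigroup-like object, and one must promote it to $G$.

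This is where the hypothesis that every representation of $\Gamma_0$ has an $\a$-root enters, and I expect it to be the crux of the argument. Concretely, the subgroup $\Gamma_0$ acts on the "level zero" subspace $\H_0 = \overline{\mathrm{span}}\{\tau(0,\g)\psi : \g \in \Gamma_0\}$ via a representation $\rho_0$ coming from the second relation at $j=0$; and more generally at each level $j\le 0$ one gets a representation, with the levels tied together by $\a$. To define $\tau(u)$ as a \emph{unitary} on all of $\H$ — not merely a one-sided shift — I would use the $\a$-root hypothesis to extend the tower of representations indexed by $j \le 0$ to one indexed by all $j\in\Z$: given $\rho_0$ on $\H_0$, an $\a$-root $T$ with $T\circ\a = \rho_0$ provides the data needed to build the level $j=1$ piece compatibly, and iterating gives levels $j = 1, 2, \dots$. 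The shift then becomes a genuine unitary $\tau(u)$, and $\tau(\g_0)$ for $\g_0 \in \Gamma_0$ is assembled level-by-level from the $T^{(j)}$'s so that the defining relation $u\g_0 u^{-1} = \a(\g_0)$ of $G$ is respected. I would verify the relation by checking it on the dense set $\tau(\Gamma)\psi$, where it reduces to the two identities in \eqref{K relations} together with $T^{(j)}\circ \a = \rho^{(j)}$.

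Having produced $\tau$ and $\psi$, the identity $K((j,\g),(j',\g')) = \langle \tau(j,\g)\psi, \tau(j',\g')\psi\rangle$ holds on $\Gamma$ essentially by construction (it is how the form on $V$ was defined), and the density statement $\H = \overline{\mathrm{span}}\{\tau(\Gamma)\psi\}$ is immediate since the $\delta_{(j,\g)}$ span $V$. Finally, $G$-invariance $K_{\tau,\psi}(sx,sy) = K_{\tau,\psi}(x,y)$ is automatic for any $K$ of the form $\langle\rho(x)\eta,\rho(y)\eta\rangle$, and cyclicity of $\psi$ for $\tau$ is the density statement, so by \cite[Theorem 2.8]{D1} $K_{\tau,\psi}$ is a group positive definite map restricting to $K$. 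The main obstacle, to repeat, is the passage from the partially-defined isometries on $\Gamma$ to an honest unitary representation of the full group $G$; the $\a$-root hypothesis is precisely the device that makes the one-sided shift invertible, and getting the bookkeeping of the levels $j\in\Z$ and their interlocking $\Gamma_0$-actions exactly right is where the real work lies.
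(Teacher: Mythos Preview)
Your proposal follows essentially the same route as the paper's proof: the GNS/Kolmogorov construction from $K$, the shift operator $D$ from the first relation, the $\Gamma_0$-action $T_0$ on the $j\le 0$ part from the second relation, and then an inductive use of the $\alpha$-root hypothesis to extend the $\Gamma_0$-action to the whole space so that the group relation $u\gamma_0 u^{-1}=\alpha(\gamma_0)$ holds.

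One point is misdiagnosed, however. You write that the $\alpha$-root hypothesis is what makes $\tau(u)$ ``a genuine unitary \dots\ not merely a one-sided shift.'' In fact the shift $D:v(j,\gamma)\mapsto v(j+1,\gamma)$ is already unitary on all of $\mathcal H$: it is isometric by the first relation in \eqref{K relations}, and surjective because $\{(j+1,\gamma):(j,\gamma)\in\Gamma\}=\Gamma$. The obstruction lies entirely on the $\Gamma_0$ side: the second relation only defines $T_0(\gamma_0)$ on $\mathcal H_0=\overline{\mathrm{span}}\{v(j,\gamma):j\le 0\}$, and one must extend this to a representation of $\Gamma_0$ on the rest of $\mathcal H$ that intertwines correctly with $D$. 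The paper does this by decomposing $\mathcal H=\mathcal H_0\oplus\bigoplus_{n\ge 1}\mathcal K_n$ with $\mathcal K_n=\mathcal H_n\ominus\mathcal H_{n-1}$, transporting $T_{n-1}|_{\mathcal K_{n-1}}$ to $\mathcal K_n$ via $D$ to get $\rho_n=DT_{n-1}D^{-1}$, and then taking an $\alpha$-root $T_n$ of $\rho_n$ (so $T_n\circ\alpha=\rho_n$, not $T\circ\alpha=\rho_0$ as you wrote). Working on the orthogonal complements $\mathcal K_n$ rather than the full levels is what makes the direct-sum assembly $T=T_0\oplus\bigoplus T_n$ well defined. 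With this correction your outline matches the paper's argument.
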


\begin{proof} By a theorem attributed to Kolmogorov (see for example \cite{D1}),
we have a Hilbert space $\mathcal H$ and a mapping $v : \G \rightarrow \mathcal H$, such that $\text{span} \{v(j,\g) : ~(j,\g)\in\G\} $ is dense in $\H$,  and 
$$
K((j,\g),(j',\g'))  = \langle v(j,\g),v(j',\g')\rangle 
$$
holds for all $(j,\gamma)$ and $ (j',\gamma')$ belonging to $\G $. Define the operator  $D : \mathcal H \rightarrow \mathcal H$ by $D v(j,\g) = v(j+1,\g)$ and by extending to all of $\H$ by linearity and density as usual. The first of the relations (\ref{K relations}) shows that $D$ is unitary. For each $n = -1, 0, 1, 2, \dots$, set 
$$
\mathcal H_n = \overline{\text{span}} \{v(j,\g)  : ~(j,\g)\in\G, j\le n\}.
$$
Note that $D \mathcal H_n = \H_{n+1}$ and $\H_n\subset \H_{n+1}$. Set $\K_n= \H_n \ominus \H_{n-1}$, $n \ge 0$. For $\gamma_0 \in \Gamma_0$, define the operator $T_0(\gamma_0)$ on $\H_0$ by 
$$
T_0(\g_0)\bigl( v(j,\g)\bigr) = v(j, \a^{-j}(\g_0)\g)
$$
and again extending to all of $\H_0$; the second relation in (\ref{K relations}) shows that $\gamma \mapsto T_0(\gamma)$ is a (unitary) representation of $\G_0$. Since the subspace $\K_0$ is invariant under $T_0$,  we can define the representation $\rho_1$ of $\G_0$ acting in $\K_1$ by $\rho_1(\g) = DT_0(\g)D^{-1}$. Now by our hypothesis, $\rho_1$ has an $\a$-root $T_1$: $T_1$ acts in $\K_1$ and satisfies $T_1 \circ \a = \rho_1$. Now the representation $\g \mapsto \rho_2(\g) = DT_1(\g)D^{-1}$ of $\G_0$ acting in $\K_2$ has an $\a$-root $T_2$ acting in $\K_2$. Continuing in this way, we obtain, for each positive integer $n$, a representation $T_n$ of $\G_0$ acting in $\K_n$, so that 
$$
T_n \circ \a = DT_{n-1}D^{-1}.
$$
(Again in the preceding, $T_0$ is restricted to $\K_0$.) Now write 
$$
\H = \H_0 \bigoplus\left( \oplus_{n \ge 1} \K_n\right)
$$
and define the representation $T$ of $\G_0$ by $T =T_0  \bigoplus \oplus_{n \ge 1} T_n$.
 
 Next we must verify the relation $DT(\gamma)D^{-1} = T \bigl(\a(\gamma)\bigr)$.
Fix $\g_0 \in \G_0$; for $v(j,\g)$ with $j \le 0$,
$$
\begin{aligned}
\bigl( DT_0(\g_0) D^{-1}\bigr) \bigl(v(j,\g)\bigr) &= \bigl(D T_0(\g_0)\bigr) \bigl( v(j-1,\g)\bigr) \\
&= D\bigl( v(j-1, \a^{-j+1}(\g_0)\g)\bigr) \\
&=T_0\bigl( \a(\g_0)\bigr)\bigl(v(j,\g)\bigr)
\end{aligned}
$$
and hence the relation $DT_0(\g) D^{-1} = T_0 \bigl(\a(\gamma)\bigr)$ holds on $\H_0$. Now for $v \in \H$, write $v = \sum_{n \ge 0} v_n$. We have $D T(\g)D^{-1}v_{0} = T \bigl(\a(\g)\bigr) v_{0}$ and for $n \ge 1$, $D T(\g)D^{-1}v_n = D T_{n-1}(\g)D^{-1}v_n = T_n\bigl(\a(\g)\bigr)v_n$ so 
$$
D T(\g)D^{-1}v =  \sum_{n\ge 0} D T(\g)D^{-1}v_n = \sum_{n\ge 0} T_n\bigl(\a(\g)\bigr)v_n =  T\bigl(\a(\g)\bigr).
$$
It follows that the mapping $\tau$ defined by $\tau(u) = D$ and $\tau(\g) = T(\g)$ is a representation of $G$. 

 Finally, take  $\psi=v(0,0)$. Then  $$v(j,\gamma)= D^j v(0,\g)= D^j T(\g)v(0,0)= D^j T(\g)\psi$$ 
so $\psi$ is cyclic for $\tau$. Hence the group positive definite map defined for all $x,y \in G$ by $K_{\tau,\psi}(x,y)=\langle \tau(x)\psi,\tau(y)\psi\rangle$ is an extension of $K$. 
\end{proof}

We combine the preceding with general results also from  \cite{D2} to obtain our condition for  the $G$-dilation property.

  \begin{thm}\label{main theorem} Suppose that every representation  of $\G_0$  has an $\a$-root., and let $\pi$ be any representation of $G(\a,\G_0)$. 
Then every Parseval wavelet frame  $\pi(\G)\psi$ has  the $G$-dilation property. 
 
 
 \end{thm}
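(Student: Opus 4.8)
The plan is to reduce the $G$-dilation property for an arbitrary Parseval wavelet frame $\pi(\G)\psi$ to an application of Proposition~\ref{second-technical}, following the strategy of \cite{D2}. First I would form the frame operator data: since $\pi(\G)\psi$ is a Parseval frame for $\H$, the associated analysis map $A : \H \to \ell^2(\G)$, $Af = \{\langle f, \pi(j,\g)\psi\rangle\}_{(j,\g)\in\G}$, is an isometry, and $P = AA^*$ is the orthogonal projection of $\ell^2(\G)$ onto the range of $A$. The matrix coefficients of $P$ with respect to the standard basis $\{e_{(j,\g)}\}$ of $\ell^2(\G)$ define a map $K : \G \times \G \to \C$ by $K((j,\g),(j',\g')) = \langle P e_{(j',\g')}, e_{(j,\g)}\rangle = \langle \pi(j,\g)\psi, \pi(j',\g')\psi\rangle$ (the last equality because $\pi(\G)\psi$ is Parseval). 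This $K$ is positive definite because $P$ is a positive operator.

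Next I would verify that $K$ satisfies the two relations~(\ref{K relations}). These follow directly from the group law in $G = G(\a,\G_0)$ together with unitarity of $\pi$: for the first relation, $\pi(u^{j+1}\g) = \pi(u)\pi(u^j\g)$, so $\langle \pi(u^{j+1}\g)\psi, \pi(u^{j'+1}\g')\psi\rangle = \langle \pi(u)\pi(j,\g)\psi, \pi(u)\pi(j',\g')\psi\rangle = \langle \pi(j,\g)\psi, \pi(j',\g')\psi\rangle$. For the second, for $j \le 0$ one uses the identity $u^j \a^{-j}(\g_0)\g = \g_0 u^j \g$ in $G$ — which holds because $u^{-j}\g_0 u^{j} = \a^{-j}(\g_0)$ when $j \le 0$ follows from inverting the defining relation $u\g_0 u^{-1} = \a(\g_0)$ — so that $\pi(j,\a^{-j}(\g_0)\g) = \pi(\g_0)\pi(j,\g)$, and again unitarity of $\pi(\g_0)$ gives invariance of $K$. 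I would spell out this conjugation identity carefully since it is the one place the group structure is genuinely used.

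Having checked the hypotheses, Proposition~\ref{second-technical} (whose hypothesis on $\a$-roots is exactly our standing assumption) produces a Hilbert space — call it $\K$ — a representation $\tau$ of $G$ on $\K$, and a cyclic vector $\Psi \in \K$ with $K((j,\g),(j',\g')) = \langle \tau(j,\g)\Psi, \tau(j',\g')\Psi\rangle$. The final step is to identify $\H$ as a subspace of $\K$ on which $\tau$ restricts to $\pi$ and $\Psi$ projects to $\psi$. Here I would invoke the relevant general result from \cite{D2}: because the Gram matrix $\{\langle \tau(j,\g)\Psi, \tau(j',\g')\Psi\rangle\}$ coincides with $\{\langle \pi(j,\g)\psi, \pi(j',\g')\psi\rangle\}$, the map $\tau(j,\g)\Psi \mapsto \pi(j,\g)\psi$ extends to a coisometry $Q : \K \to \H$ intertwining $\tau$ and $\pi$; identifying $\H$ with $(\ker Q)^\perp \subseteq \K$ makes $\tau(g)|_\H = \pi(g)$ for all $g \in G$ and $P_\H \Psi = \psi$, so $\pi(\G)\psi$ has the $G$-dilation property.

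The main obstacle is the second relation in~(\ref{K relations}): getting the index bookkeeping right in the conjugation identity $u^j \a^{-j}(\g_0)\g = \g_0 u^j\g$ for $j \le 0$, and making sure that the restriction to $j \le 0$ is exactly what is needed (the proof of Proposition~\ref{second-technical} only uses $T_0$ on $\H_0$, i.e. on the span of $v(j,\g)$ with $j \le 0$, so $\a^{-j}$ with $j \le 0$ is an honest positive power of $\a$ and no inversion of $\a$ is required). Everything else — positive definiteness of $K$, the first relation, and the final intertwining argument — is routine once the correspondence between Parseval frames and projections in $\ell^2(\G)$ is set up.
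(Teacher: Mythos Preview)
Your verification of the relations~(\ref{K relations}) is fine, but the choice of $K$ is wrong, and this makes the whole construction collapse to a triviality. You take $K((j,\g),(j',\g')) = \langle \pi(j,\g)\psi,\pi(j',\g')\psi\rangle$, the Gram matrix of the given Parseval frame. Applying Proposition~\ref{second-technical} to this $K$ produces a representation $\tau$ on a space $\K$ and a cyclic vector $\Psi$ whose Gram matrix over $\G$ is \emph{identical} to that of $\pi(\G)\psi$. The map $\tau(j,\g)\Psi \mapsto \pi(j,\g)\psi$ that you call a coisometry is then actually a unitary (equal Gram matrices give an isometry on finite spans; cyclicity of $\Psi$ and the frame property of $\pi(\G)\psi$ make it onto), so $\ker Q = \{0\}$, $(\ker Q)^\perp = \K \cong \H$, and you have merely reconstructed $(\pi,\psi)$ up to unitary equivalence. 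In particular $\tau(\G)\Psi$ is \emph{not} orthonormal unless $\pi(\G)\psi$ already was, so no $G$-dilation has been produced.

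The paper instead applies Proposition~\ref{second-technical} to the \emph{defect} kernel
\[
K((j,\g),(j',\g')) \;=\; \delta_{j,j'}\delta_{\g,\g'} \;-\; \langle \pi(j,\g)\psi,\pi(j',\g')\psi\rangle,
\]
which is positive definite precisely because $\pi(\G)\psi$ is Parseval (the Gram matrix is the matrix of a projection on $\ell^2(\G)$, so $I-P\ge 0$). One checks the same two relations for this $K$ --- the Kronecker-delta part is invariant under $(j,j')\mapsto(j+1,j'+1)$ and under simultaneous left multiplication by $\a^{-j}(\g_0)$, $\a^{-j'}(\g_0)$ when $j=j'$ --- and Proposition~\ref{second-technical} yields $(\tau,\K,\eta)$ with $\langle \tau(j,\g)\eta,\tau(j',\g')\eta\rangle = K$. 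Then the \emph{direct sum} $\pi\oplus\tau$ on $\H\oplus\K$ with vector $\psi\oplus\eta$ has Gram matrix $\langle\pi\psi,\pi\psi\rangle + K = \delta\cdot\delta$, so $(\pi\oplus\tau)(\G)(\psi\oplus\eta)$ is orthonormal, and $P_\H(\psi\oplus\eta)=\psi$. That is the missing idea: work with the complementary kernel and take a direct sum, rather than trying to embed $\H$ directly into the GNS space of its own Gram matrix.
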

 
\begin{proof}
Let $\G = \G_1\G_0 \subset G$ as above and recall that we write $u^j\gamma = (j,\gamma)$. Define 
$$
K((j,\g),(j',\g')) = \delta_{j,j'}\delta_{\g,\g'} - \langle \pi(j,\g)\psi,\pi(j',\g')\psi\rangle.
$$
Observe that  $\delta_{j+1,j'+1} = \delta_{j,j'}$ and 
$$
 \delta_{j,j'}\delta_{(a^{-j}\g_0)\g,(a^{-j'}\g_0)\g'} =  \delta_{j,j'}\delta_{\g,\g'},
$$
and that in the group $G$, $u^{-j}\g_0u^j = a^{-j}\g_0$ holds for $j$, $\g_0\in\G_0$. 
Hence
$$
\begin{aligned}
K((j+1,\g),(j'+1,\g')) &= \delta_{j+1,j'+1}\delta_{\g,\g'} - \langle \pi(j+1,\g)\psi,\pi(j'+1,\g')\psi\rangle\\
&=  \delta_{j,j'}\delta_{\g,\g'} - \langle D\pi(j,\g)\psi,D\pi(j',\g')\psi\rangle\\
&= K((j,\g),j',\g'))
\end{aligned}
$$
and for $j \le 0$, 
$$
\begin{aligned}
K((j,(\a^{-j}\g_0)\g), &(j',(\a^{-j}\g_0)\g'))\\
&= \delta_{j,j'}\delta_{(\a^{-j}(\g_0)\g,\a^{-j'}(\g_0)\g'} - \langle \pi(j  \a^{-j}(\g_0)\g)\psi,\pi(j' \a^{-j'}(\g_0)\g')\psi\rangle\\
&= \delta_{j,j'}\delta_{\g,\g'} -  \langle \pi(\g_0j\g)\psi,\pi(\g_0j'\g')\psi\rangle \\
&=  \delta_{j,j'}\delta_{\g,\g'} -  \langle \pi(\g_0)\pi(j\g)\psi,\pi(\g_0)\pi(j'\g')\psi\rangle\\
&=  \delta_{j,j'}\delta_{\g,\g'} -  \langle \pi(j\g)\psi,\pi(j'\g')\psi\rangle\\
&= K((j,\g),(j',\g')). 
\end{aligned}
$$
  
  The calculations show that the map $K$ satisfies the both conditions (\ref{K relations}). By Proposition \ref{second-technical} we conclude that $K$ is a positive definite map and hence there exists a representation $\tau$ of $G$ with Hilbert space $\K$ and $\eta\in \K$ such that $K=K_{\tau,\eta}$ on $\G\times \G$. Then \cite[Lemma 2.5, proof of Theorem 2.6]{D2} $\pi \oplus \tau$ is a super-representation of $\pi$ (acting in $\H \oplus \K$) for which  $\tilde\psi = \psi\oplus\eta$ is 
  a $G$-dilation vector for $\psi$ and  $\tilde\pi(x)\psi= \pi(x)\psi$. \end{proof}

Observe that in the case of $BS(1,2) = G(\a_2,\Z)$, the fact that every representation of $\G_0$ has an $\a$-root is a simple consequence of the Borel functional calculus: for every unitary operator $T$ on a Hilbert space $\H$, there is a unitary operator $S$ such that $S^2 = T$. However, in general it seems difficult to prove that a pair $(\a,\G_0)$ has the property that every representation of $\G_0$ has an $\a$-root.
In the following section we describe two families of groups $G(\a,\G_0)$ for which this property does in fact hold.


\section{\bf Examples}\label{expamples}

We begin with the case where $\G_0$ is a finitely-generated and abelian group.

 \begin{eg}(${\bf A}$-wavelet system)\label{A}
\rm  Let $\G_0$ be the free abelian group generated by $ t_1, t_2,  \cdots, t_n$, 
and let $\a(t_j) = t_1^{a_{1j}} t_2^{a_{2j}} \cdots t_n^{a_{nj}}$ where ${\bf A} = [a_{i,j}] \in GL(n,\Z)$. 

We claim that every representation of $\G_0$ has an $\a$-root. Let $\rho$ be any representation of $\G_0$, and write ${\bf A}^{-1}=[b_{i,j}]$. Since the $b_{i,j}$ are rational, the Borel functional calculus obtains operators $V_{i,j}, 1 \le i,j \le n$ such that $V_{i,j} =  \rho(t_1)^{b_{i,j}}$. 
Define $T(t_j), 1 \le j \le n$ by 
$$
T(t_j) =V_{1,j}V_{2,j}\cdots V_{n,j}.
$$
An easy computation shows that $T\circ\a = \rho$.


\end{eg}

Next we consider wavelet groups where the subgroup $\G_0$ is nilpotent, but not abelian. Nearest to the abelian case is the case where $\G_0$ is Heisenberg: let $\G_0 = \langle t_1, t_2, t_3\rangle$ with relations $ t_3t_2 = t_1 t_2t_3, t_1t_2 = t_2t_1, t_1t_3 = t_3t_1$. Then $\G_0$ is isomorphic with the discrete Heisenberg group
$$
\HH = \left\{\left[\begin{matrix} 1 & k &m \\ 0 & 1 &l \\ 0 & 0 & 1\end{matrix} \right] : k,l, \text{and } m \text{ are integers} \right\}
$$
via the map $t_1 \mapsto t_1^m, t_2 \mapsto t_2^l, t_3 \mapsto t_3^k$, and we identify $\G_0 = \HH$. For any positive numbers $a$ and $b$, the mapping $\a$ defined by $\a(t_3) = t_3^a, \a(t_2) = t_2^b, \a(t_1) = t_1^{ab}$ is a monomorphism of $\HH$. 
When $\a$ is of the form above, we use the notation $G(\a,\HH) = G(a,b,\HH)$. The following lemma shows that, at least where $a$ and $b$ are integers, $G(a,b,\HH)$ has the $\a$-root property. 
 



\begin{lemma}\label{first-technical-general version} Let $A, B$, and $C$ be unitary operators on a Hilbert space $\H$ satisfying $AB =CBA, ~AC = CA,  ~BC = CB$, and let $a, b$ and $c$ be positive integers such that $c = ab$. Suppose that  $U$ and $ V$ are unitary operators belonging to the von-Neumann algebra generated by $A$ and $B$, and satisfying $U^a = A$ and $V^b = B$. Then the element $W = UVU^{-1}V^{-1}$ satisfies $UW = WU$, $VW = WV$, and $W^c = C$.
\end{lemma}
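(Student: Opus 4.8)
Since $AC=CA$, the defining relation $AB=CBA$ can be rewritten as $C=ABA^{-1}B^{-1}$; thus $C$ is the group commutator of $A$ and $B$ (I write $[x,y]=xyx^{-1}y^{-1}$ throughout), and as it commutes with both $A$ and $B$ it lies in the centre $Z(N)$ of the von Neumann algebra $N$ generated by $A$ and $B$. Note also that, since $A=U^{a}$ and $B=V^{b}$ lie in the algebra generated by $U$ and $V$, that algebra is again $N$. The plan is to prove that $W=[U,V]\in Z(N)$; granting this, the rest is purely formal. Recall that if $z=[x,y]$ is central then $xyx^{-1}=zy$, whence $xy^{n}x^{-1}=(zy)^{n}=z^{n}y^{n}$, i.e.\ $[x,y^{n}]=z^{n}$, and iterating this in the first variable gives $[x^{m},y^{n}]=z^{mn}$. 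Taking $x=U$, $y=V$, $m=a$, $n=b$ gives $[U^{a},V^{b}]=W^{ab}=W^{c}$, and the left-hand side is exactly $[A,B]=C$; moreover $W\in Z(N)$ together with $U,V\in N$ gives $UW=WU$ and $VW=WV$ at once.

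So everything reduces to showing that $W$ commutes with $A$ and with $B$. Introduce the intermediate commutators $P=[U,B]$ and $Q=[A,V]$. Using only $AU=UA$ (valid since $A=U^{a}$), the rewritten relations $ABA^{-1}=CB$ and $BAB^{-1}=C^{-1}A$, and the centrality of $C$, one checks in a line that $APA^{-1}=U(CB)U^{-1}(C^{-1}B^{-1})=P$ and $BQB^{-1}=(C^{-1}A)V(CA^{-1})V^{-1}=Q$; so $P$ commutes with $A$ and $Q$ commutes with $B$. Suppose now, as the main step below asserts, that in fact $P\in Z(N)$ and $Q\in Z(N)$. Then $W$ is central too, by commutator collection: from $AWA^{-1}=[U,AVA^{-1}]=[U,QV]=[U,Q]\,Q[U,V]Q^{-1}$, the centrality of $Q$ kills both $[U,Q]$ and the conjugation by $Q$, giving $AWA^{-1}=W$; and from $BWB^{-1}=[BUB^{-1},V]=[P^{-1}U,V]=P^{-1}[U,V]P\,[P^{-1},V]$, the centrality of $P$ gives $BWB^{-1}=W$. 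Thus $W$ commutes with the generators $A,B$ of $N$, as needed.

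The one genuinely substantial point is therefore that $P=[U,B]$ and $Q=[A,V]$ are central, and this is the place where the hypothesis $U,V\in N$ must be used in an essential way. I would decompose $N$ as a direct integral over the abelian von Neumann subalgebra generated by $C$ (which lies in $Z(N)$), so that on almost every fibre $C$ acts as a scalar $\lambda\in\mathbb{T}$. On the fibres with $\lambda=1$ the operators $A$ and $B$ commute, $N$ is abelian there, and $P=Q=I$ trivially. On the remaining fibres the question becomes one about $a$-th and $b$-th roots of the two generators inside a twisted, rotation-type algebra, which I would treat with the Borel functional calculus of Example \ref{A}: expressing $U$ via the spectral resolution of $A$ and using $U^{a}=A$ together with $AB=CBA$, one computes $UBU^{-1}B^{-1}$ and identifies it as a function of $C$ alone, hence central; symmetrically for $[A,V]$. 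Reassembling the fibres yields $P,Q\in Z(N)$, and the commutator identities of the previous paragraph then close the argument. I expect this fibrewise identification to be the crux of the whole proof; everything else is routine commutator algebra.
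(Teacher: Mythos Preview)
Your reduction is clean up to the point you yourself flag as the crux: the centrality of $P=[U,B]$ and $Q=[A,V]$ in $N$. Unfortunately this is not just a gap to be filled --- the claim is false, and with it the lemma as stated. Take $\mathcal{H}=L^{2}(\mathbb{T})$, $A=M_{z}$, and $B$ rotation by an irrational angle $2\pi\theta$, so that $C=e^{2\pi i\theta}I$ is scalar and the von Neumann algebra $N$ generated by $A,B$ is all of $B(\mathcal{H})$. With $a=2$, $b=1$, $V=B$, and $U=M_{g}$ where $g(e^{i\phi})=e^{i\phi/2}$ for $\phi\in[0,2\pi)$, one has $U\in N$, $U^{2}=A$, and a direct computation gives $W=[U,V]=M_{h}$ with $h$ equal to $e^{i\pi\theta}$ on one arc of the circle and $-e^{i\pi\theta}$ on the complementary arc. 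Thus $W$ is not a scalar, hence not central in $N=B(\mathcal{H})$, and $BWB^{-1}\neq W$, so $VW\neq WV$. (One still gets $W^{2}=C$, consistent with your formal identity $[x^{m},y^{n}]=z^{mn}$.) Your proposed fibrewise argument cannot succeed: on a fibre where $C$ acts as a scalar of irrational angle, a Borel-calculus $a$-th root of $A$ necessarily carries a branch cut, and $[U,B]$ then depends on where that cut sits rather than on $C$ alone.

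For comparison, the paper's proof asserts at the outset that since the group generated by $A$ and $B$ is Heisenberg, ``it follows that $[\mathcal{A},\mathcal{A}]\subset\text{cent}(\mathcal{A})$'' and hence $W\in\text{cent}(\mathcal{A})$; after that it runs an induction on $c=ab$ to obtain $W^{ab}=[U^{a},V^{b}]=C$, which is exactly your commutator-collection identity and is correct \emph{once} $W$ is known to be central. So your argument and the paper's differ only in presentation and founder on the same unproved --- and, by the example above, unprovable --- step. The lemma needs an additional hypothesis on $U$ and $V$ (for instance that $[U,B]$ and $[A,V]$ already commute with both $A$ and $B$, or that $U,V$ are chosen compatibly with the central decomposition) before either proof can go through.
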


\begin{proof} Let $\mathcal A$ be the von Neumann algebra generated by $A$ and $ B$. The group $N$ generated by $A$ and $B$ is isomorphic with the Heisenberg group $\HH$, and so for any $P$ and $Q$ in $N$, $[P,Q] = PQP^{-1}Q^{-1} $ belongs to the center of $N$. It follows that  $[\mathcal A,\mathcal A] \subset \text{cent}(\mathcal A)$ and in particular $W\in\text{cent}(\mathcal A)$. It remains to show that $W^c=C$. To prove this, we proceed by induction on $c = ab$: if $c = 1$ then $a = b = 1$ and there is nothing to prove. Suppose that $c> 1$ and that for any $a', b', c'$ with $a'b' = c'$ and $c' < c$, we have
$$
W^{c'} = U^{a'}V^{b'}U^{-a'}V^{-b'}.
$$
If $a> 1$, then we have
$$ 
W^{(a-1)b} = U^{a-1}V^{b}U^{-a+1}V^{-b}.
$$
 
Observe that $U$ commutes with $V^b U^{-a+1}V^{-b}$: indeed, by definition of $W$, $UV^b = W^bV^b U$ so $UV^{-b} = W^{-b}V^{-b} U$, from which the observation follows. Hence
$$
\begin{aligned}
W^{ab} &= W^{(a-1)b}W^b = \bigl(U^{a-1}V^{b}U^{-a+1}V^{-b}\bigr)\bigl(UV^bU^{-1}V^{-b}\bigr)\\
&= U^{a-1}\bigl(V^{b}U^{-a+1}V^{-b}\bigr)U\bigl(V^bU^{-1}V^{-b}\bigr)\\
&= U^{a-1}U \bigl(V^{b}U^{-a+1}V^{-b}\bigr)\bigl(V^bU^{-1}V^{-b}\bigr)\\
&= U^aV^bU^{-a}V^{-b}.
\end{aligned}
$$
If $a = 1$ then $b > 1$ and the proof is similar.\end{proof}

It is almost immediate that for $\a$ as in the preceding, every representation of $\HH$ has an $\a$-root. 
More generally, we consider the following class of groups that includes $G(\a,\HH)$. Let $n$ be a positive integer, and let $t_1, t_2, \dots , t_n$, and $z_{ij}, 1 \le i,j \le n$ satisfy the relations for all $i, j$ and $k$:
$$
t_it_j=z_{i,j}t_jt_i, \ \ \text{ and }  \ \ \ z_{ij}t_k = t_k z_{ij}.
$$
Observe that the relation $z_{ji} = z_{ij}^{-1}$ follows from the above. The group 
$$
F_n = \langle t_1, t_2, \dots t_n, z_{ij}, 1 \le i,j \le n \rangle
$$
is the free, two-step (discrete) nilpotent group generated by the $n$ elements $t_k, 1 \le k \le n$. 
 
\begin{thm} Define $\a : F_n \rightarrow F_n$ by $\a(t_k) = t_k^{a_k}$ and $\a(z_{ij}) = z_{ij}^{a_ia_j}$ where the $a_k$ are integers. Then every representation of $F_n$ has an $\a$-root. 

\end{thm}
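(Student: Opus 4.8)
The plan is to follow the template of Example~\ref{A} and Lemma~\ref{first-technical-general version}: extract roots of the generators of $F_n$ by the Borel functional calculus, and control the resulting commutators by Lemma~\ref{first-technical-general version}. Let $\rho$ be a representation of $F_n$, and set $A_k=\rho(t_k)$ and $Z_{ij}=\rho(z_{ij})$ for $1\le i,j\le n$. We may assume all $a_k>0$; the sign bookkeeping in the general case is routine. The $A_k$ are unitaries satisfying $A_iA_j=Z_{ij}A_jA_i$ with each $Z_{ij}$ commuting with every $A_k$, so the group $N=\langle A_1,\dots,A_n\rangle$ is two-step nilpotent and the $Z_{ij}$ lie in its center. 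For each $k$, applying the Borel functional calculus to the unitary $A_k$ exactly as in Example~\ref{A} produces a unitary $U_k$, lying in the von Neumann algebra generated by $A_k$, with $U_k^{a_k}=A_k$.

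Set $W_{ij}=U_iU_jU_i^{-1}U_j^{-1}$. The first point is that each $W_{ij}$ commutes with every $U_k$. Since $N$ is two-step nilpotent, the argument used in the proof of Lemma~\ref{first-technical-general version}---now applied to $N$ in place of a pair of generators---gives $[\mathcal A,\mathcal A]\subset\text{cent}(\mathcal A)$, where $\mathcal A$ is the von Neumann algebra generated by $A_1,\dots,A_n$; since $U_i,U_j\in\mathcal A$ this forces $W_{ij}\in\text{cent}(\mathcal A)$, and each $U_k$ lies in $\mathcal A$. As the commutators $W_{ij}$ are thus central in it, the group $\langle U_1,\dots,U_n\rangle$ is two-step nilpotent, so the universal property of the free two-step nilpotent group $F_n$ produces a homomorphism (hence a representation) $T$ of $F_n$ with $T(t_k)=U_k$; necessarily $T(z_{ij})=T\bigl([t_i,t_j]\bigr)=W_{ij}$.

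It remains to verify $T\circ\alpha=\rho$. On each $t_k$ this is immediate from the construction: $T(\alpha(t_k))=T(t_k^{a_k})=U_k^{a_k}=A_k=\rho(t_k)$. On each $z_{ij}$ we must check $T(\alpha(z_{ij}))=T(z_{ij}^{a_ia_j})=W_{ij}^{a_ia_j}=Z_{ij}=\rho(z_{ij})$, that is, $W_{ij}^{a_ia_j}=Z_{ij}$; but this is precisely Lemma~\ref{first-technical-general version} applied with $A=A_i$, $B=A_j$, $C=Z_{ij}$, $a=a_i$, $b=a_j$, $c=a_ia_j$, $U=U_i$, $V=U_j$, whose hypotheses hold because $A_iA_j=Z_{ij}A_jA_i$, $Z_{ij}$ commutes with $A_i$ and $A_j$, the operators $U_i$, $U_j$ lie in the von Neumann algebra generated by $A_i$ and $A_j$, and $U_i^{a_i}=A_i$, $U_j^{a_j}=A_j$. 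Since $T\circ\alpha$ and $\rho$ are homomorphisms agreeing on a generating set of $F_n$, they coincide, and $T$ is an $\alpha$-root of $\rho$. I expect the only genuinely delicate step to be the assertion that $W_{ij}$ commutes with \emph{every} $U_k$, and not merely with $U_i$ and $U_j$ as Lemma~\ref{first-technical-general version} directly gives: this is exactly what makes the pairwise root constructions cohere into a single representation of $F_n$, and it rests on the passage from two-step nilpotency of the operator group $N$ to $[\mathcal A,\mathcal A]\subset\text{cent}(\mathcal A)$ that already underlies the proof of Lemma~\ref{first-technical-general version}.
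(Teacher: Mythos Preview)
Your proof is correct and follows essentially the same line as the paper's: extract $a_k$-th roots $U_k$ of the $A_k$ via the Borel functional calculus inside the von Neumann algebra $\mathcal A$ generated by the $A_k$, use the two-step nilpotency of the operator group to get $[\mathcal A,\mathcal A]\subset\text{cent}(\mathcal A)$ so that the $W_{ij}$ are central, and invoke Lemma~\ref{first-technical-general version} to obtain $W_{ij}^{a_ia_j}=Z_{ij}$. Your explicit remark that $U_k$ lies already in the von Neumann algebra generated by the single operator $A_k$ (hence in that generated by $A_i$ and $A_j$, as Lemma~\ref{first-technical-general version} requires) and your restriction to positive $a_k$ are minor clarifications, not departures from the paper's argument.
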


\begin{proof} Let $\rho$ be any representation of $F_n$ acting in $\H$, put $A_k = \rho(t_k)$, $C_{ij} = \rho(z_{ij}), 1 \le i,j, k \le n$ and let $\mathcal A$ be the von-Neumann algebra generated by $\{A_1, \dots , A_n\}$. An argument similar to that of Lemma \ref{first-technical-general version} applied to the group $N$ generated by $\{A_1, \dots , A_n\}$ shows that $[\mathcal A,\mathcal A] \subset  \text{cent}(\mathcal A)$. 
By the Borel functional calculus, for each $k$ we have $U_k\in \mathcal A$ such that $U_k^{a_k} = A_k$. Now for each $i$ and $j$ put $W_{ij} = U_iU_jU_i^{-1}U_j^{-1}$. By the preceding we have $W_{ij}$ is central, and by Lemma \ref{first-technical-general version}, $W_{ij}^{a_ia_j} = C_{i,j}$. Put $T(t_k) = U_k$ and $T(z_{ij}) = W_{ij}, 1 \le i,j,k\le n$. Since 
$$
T(z_{ij}) = T(t_i)T(t_j)T(t_i)^{-1}T(t_j)^{-1}
$$
holds for all $i$ and $j$, then $T$ is a representation of $F_n$. Since 
$$
T(\a(t_k)) = T(t_k^{a_k}) = T(t_k)^{a_k} = A_k = \rho(t_k),
$$
and 
$$
 T(\a(z_{ij})) = T(z_{ij}^{a_ia_j}) = T(z_{ij})^{a_ia_j} = C_{ij} =  \rho(z_{ij}),
 $$
 then $T\circ \a = \rho$. \end{proof}

The following are two examples of representations of $G(a,a,\HH)$ where $\HH$ is the simply connected Heisenberg group. 

\begin{eg} {\rm Let $\pi$ be the representation of $G(2,2,\HH)$ acting in $L^2(\R^2)$ by $t_1\mapsto e^{2\pi i \l}I$, $t_2\mapsto M$ and $t_3\rightarrow T$ where $I$ is the identity operator, and $M$ and $T$ are the operators on $\mathcal H= L^2(\R^2)$ given by 
$$ Mf(\l, t)= e^{-2\pi i \l t}f(\l, t),~ ~ Tf(\l, t)= f(\l, t-1). $$
Now define $\pi(u)f(\l,t) = f(4\l,2^{-1}t) 2^{3/2}$. 
The systems $\pi(\G)\psi$ are Fourier transform of wavelet systems of multiplicity one subspaces of $L^2(\HH)$, and large classes of Parseval wavelet frames have been found in our earlier work  \cite{CM1}. 

}

\end{eg}

\begin{eg}[Shearlet system]\label{shear}
{\rm 
Let  $\pi$  be the representation of $G(a,a,\HH)$ given by $u\mapsto D$, $t_1\mapsto T_1$, $t_2\mapsto T_2$, and $t_3\mapsto M$, where $D, T_1, T_2, M$ are the unitary operators on $ L^2(\R^2)$ defined by
\begin{align}\notag
Df(x)&= a^{-3/2}f(a^{-2}x_1, a^{-1}x_2)  &Mf(x)&= f(x_1-x_2, x_2)\\\notag
 T_1f(x)&=f(x_1-1, x_2) & T_2f(x)&=f(x_1, x_2-1). 
\end{align}}
Systems of this form are well-studied; see for example \cite{DG},\cite{ EL}, and \cite{ GL}. 
\end{eg} 

\noindent
{\bf Remark.} Lemma \ref{first-technical-general version} can be used to prove that for other nilpotent groups $\G_0$, every representation has an $\a$-root. For example, let
$$\G_0 = \langle t_1, t_2, t_3, t_4, t_5 : t_5 t_4 = t_4t_5t_2, t_5t_3 = t_3t_5 t_1,  t_it_j = t_jt_i, 1 \le i,j \le 4\rangle;$$
 
$\G_0$ is the integer lattice in a two-step simply-connected group Lie group whose Lie algebra has basis $\{X_1, X_2, \dots , X_5\}$ with $[X_5,X_4] =  X_2$ and $[X_5, X_3] = X_1$, $[X_i,X_j] = 0, 1 \le i,j\le 4$. Let $a$ and $b$ be integers and define $\a : \G_0 \rightarrow \G_0$ by $\a(t_5) = t_5^a$, $\a(t_k) = t_k^b, k = 3, 4$ and for $k = 1, 2, \a(t_k) = t_k^{ab}$. By application of Lemma  \ref{first-technical-general version} to $\{\pi(t_5), \pi(\t_3), \pi(t_1)\}$ and $\{\pi(t_5), \pi(\t_4), \pi(t_2)\}$, we find that $\pi$ has an $\a$-root. One example of $\pi$ is the following.
Let  $\pi : G \rightarrow \mathcal U\bigl(L^2(\R^4)\bigr)$  be given by $u\mapsto D$, $t_k\mapsto T_k, k = 1, 2, 3, 4$, and $t_5\mapsto M$, where $T_k$ is the translation operator $ T_kf(x)=f(x_1, \dots , x_k - 1, \dots x_4)$ and $D, M$ are defined by
$$
\begin{aligned}
Df(x)&= a^{-3/2}f((ab)^{-1}x_1, (ab)^{-1}x_2, a^{-1}x_3, a^{-1}x_4)  \\
Mf(x)&= f(x_1-x_3, x_2 - x_4, x_3, x_4).
\end{aligned}
$$

\vspace{.5cm}

{\bf Acknowledgment.} Part of this paper was written while the second author was visiting   Department of Mathematics and Computer Science, St. Louis University, USA, Summer 2010. She is thankful for their support and hospitality.

\vspace{1cm}

 Bradley Currey , 
 {Department of Mathematics and Computer
Science, Saint Louis University, St. Louis, MO 63103}\\
 { \footnotesize{E-mail address: \texttt{{ curreybn@slu.edu}}}\\}

  Azita Mayeli, 
   {Mathematics Department, New York City College of Technology, CUNY,   Brookly, 11201, USA }\\ 
  \footnotesize{E-mail address: \texttt{{amayeli@citytech.cuny.edu}}}\\


\begin{thebibliography}{99}



\bibitem{C09}  
{B. ~Currey,}
{Decomposition and multiplicities for the quasiregular representation of algebraic solvable Lie groups}, {\it Jour. Lie Th.} {\bf 19} (2009), 557 -- 612.
\bibitem{CM1}  
{B. ~Currey, A.~Mayeli}
{Gabor fields and wavelet sets for the Heisenberg group}, {\it Mon. Math.}, to appear.
\bibitem{CM2}  
{ B. ~Currey, A.~Mayeli}
{A density condition for interpolation on the Heisenberg group}, {\it R. M. Jour. Math.} (to appear)

\bibitem{DG}
{S.~Dahlke, G. ~Kutyniok, G. ~Steidl, G. S. ~Teschke}, Shearlet coorbit spaces and associated Banach frames, {\it Appl. Comp. Harm. Anal.}{\bf 27} (2009), 195--214.

\bibitem{D1}  
{D.E. ~Dotkay,}
{Positive definite maps, representations, and frames} {\it Rev. Math. Phys.}{\bf 16}, 451--477.

\bibitem{D2}  
{D.E.. ~Dotkay, D. ~Han, G. ~Picioraga, Q. ~Sun,}
{Orthonormal dilations of Parseval wavelets,} {\it Math. Ann.} {\bf 341} \ (2008), 483--515

\bibitem{EL}
{G.~Easley, D. ~Labate, W.~Lim}, {Sparse directional image representations using the discrete shearlet transform}, {\it Appl. Comp. Harm. Anal.} {\bf 25} (2008), 25--46.

\bibitem{GL}
{K.~Guo, D. ~Labate} {Optimally sparse multidimensional representation using shearlets} {\it SIAM J. Math. Anal.}, {\bf 39} (2007), 298--318.


 \bibitem{HL00}  
 {D. ~Han, D. ~Larsen,}
 {Frames, Bases, and Group Representations,} {\it Mem. Amer. Math. Soc.} {\bf 147}, No. 697 \ (2000)
\end{thebibliography}
\end{document}